\theoremstyle{plain}
\newtheorem{theorem}{Theorem}[section]
\newtheorem{proposition}[theorem]{Proposition}
\newtheorem{lemma}[theorem]{Lemma}
\newtheorem{corollary}[theorem]{Corollary}
\theoremstyle{definition}
\newcommand{\R}{\mathbb{R}}
\newcommand{\C}{\mathbb{C}}
\def\new#1{\textsl{\textbf{{#1}}}}
\def\<{\langle}
\def\>{\rangle}
\def\ch#1{{{\bf H}^{#1}_{\C}}}
\def\P{\mathbb P}
\begin{document}
\title[Complex Kleinian groups]{A characterization of complex hyperbolic Kleinian groups in dimension $3$ with trace fields contained in $\mathbb R$}

\author{Joonhyung Kim}
\address{Joonhyung Kim \\
    Department of Mathematics, Konkuk University\\
           1 Hwayang-dong, Gwangjin-gu\\
           Seoul 143-701, Republic of Korea}
\email{calvary\char`\@snu.ac.kr}%

\author{Sungwoon Kim}
\address{Sungwoon Kim \\
    Center for Mathematical Challenges, KIAS\\
    Hoegiro 85, Dongdaemun-gu\\
    Seoul 130-722, Republic of Korea}
\email{sungwoon\char`\@kias.re.kr}
        \date{}
        \maketitle

\begin{abstract}
We show that $\Gamma < \textbf{SU}(3,1)$ is a non-elementary complex hyperbolic Kleinian group in which $tr(\gamma) \in \R$ for all $\gamma \in \Gamma$ if and only if $\Gamma$ is conjugate to a subgroup of $\textbf{SO}(3,1)$ or $\textbf{SU}(1,1)\times\textbf{SU}(2)$.
\end{abstract}

\footnotetext[1]{2000 {\sl{Mathematics Subject Classification.}}
22E40, 30F40, 57S30.} \footnotetext[2]{{\sl{Key words and phrases.}}
Complex hyperbolic space, Complex hyperbolic Kleinian group, Cartan angular invariant.}
\footnotetext[3]{\sl{The first author was supported by NRF
grant 2013-014376.}}
\footnotetext[4]{\sl{The second author was partially supported by the Basic Science Research Program through the National Research Foundation of Korea (NRF) funded by the Ministry of Education, Science and Technology (NRF-2012R1A1A2040663)}}

\section{Introduction}
Let $\Gamma < \textbf{SU}(2,1)$ be a non-elementary complex hyperbolic Kleinian group.
The \emph{trace field} of $\Gamma$ is the field generated by the traces of all the elements of $\Gamma$ over the base field $\mathbb Q$. Maskit \cite[Theorem V.G.18]{Ma} characterized non-elementary hyperbolic Kleinian groups of $\mathbf{SL}(2,\mathbb C)$ whose trace fields are contained in $\mathbb R$.
The condition that the trace field of $\Gamma$ is contained in $\mathbb R$ is equivalent to that $tr(\gamma) \in \mathbb R$ for all $\gamma\in \Gamma$.
In \cite{FLW}, X. Fu, L. Li and X. Wang showed that if $tr(\gamma) \in \R$ for all $\gamma \in \Gamma$, then $\Gamma$ is Fuchsian. Here, a complex hyperbolic Kleinian group in dimension $2$ is called \emph{Fuchsian} if it keeps invariant a disc in Riemann sphere. It is very natural to generalize this result and there are two ways to generalize it, which are either $\Gamma$ is a subgroup of $\textbf{SU}(n,1)$, where $n\geq3$ or $\Gamma$ is a subgroup of $\textbf{Sp}(n,1)$. In latter case, J. Kim proved in the case of  $\textbf{Sp}(2,1)$ in \cite{Kim}.

In this paper, we consider the same problem in the case that $\Gamma$ is a subgroup of $\textbf{SU}(3,1)$. Our main theorem is the following.
\begin{theorem}
Let $\Gamma < \mathbf{SU}(3,1)$ be a non-elementary complex hyperbolic Kleinian group. Then $tr(\gamma) \in \R$ for all $\gamma \in \Gamma$ if and only if  $\Gamma$ is conjugate to a subgroup of $\mathbf{SO}(3,1)$ or $\mathbf{SU}(1,1)\times\mathbf{SU}(2)$.
\end{theorem}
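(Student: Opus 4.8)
The plan is to prove both directions, the forward one being a direct computation and the real content lying in the converse. For the forward direction, a subgroup of $\mathbf{SO}(3,1)$ consists of real matrices and so has real traces; and if $\gamma=(A_\gamma,B_\gamma)\in\mathbf{SU}(1,1)\times\mathbf{SU}(2)$ then $\operatorname{tr}\gamma=\operatorname{tr}A_\gamma+\operatorname{tr}B_\gamma$, where writing out the entries of a matrix in $\mathbf{SU}(1,1)$ or $\mathbf{SU}(2)$ shows each summand equals twice the real part of a diagonal entry, hence is real. For the converse, let $\rho\colon\Gamma\hookrightarrow\mathbf{SU}(3,1)$ be the inclusion and $\bar\rho$ its entrywise complex conjugate. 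Taking the defining Hermitian form to be represented by a real matrix, $\bar\rho$ again lands in $\mathbf{SU}(3,1)$, and the hypothesis $\operatorname{tr}\gamma\in\R$ says precisely that $\rho$ and $\bar\rho$ have equal characters. I would then divide into cases according to whether $\Gamma$ acts irreducibly on $\C^{4}$.

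In the irreducible case $\rho$ and $\bar\rho$ are simple with equal characters, hence isomorphic: $\bar\rho=M\rho M^{-1}$ for some $M$. Conjugating again and applying Schur's lemma gives $\bar M M=\lambda I$ with $\lambda\in\R^{\times}$, and after a positive real rescaling $\lambda=\pm1$, the sign being the Frobenius--Schur indicator. If $\lambda=+1$ then $M$ defines a $\Gamma$-invariant real structure; solving $M=\bar P P^{-1}$ by Hilbert 90 for $\mathbf{GL}_{4}$ conjugates $\Gamma$ into $\mathbf{GL}(4,\R)$, and matching this real form against the signature $(3,1)$ Hermitian form places $\Gamma$ inside $\mathbf{SO}(3,1)$. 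If $\lambda=-1$ then $Jv:=M\bar v$ is a $\Gamma$-invariant quaternionic structure, so $\Gamma$ would lie in a symplectic subgroup of $\mathbf{U}(3,1)$; but a quaternionic-type irreducible subgroup of $\mathbf{U}(p,q)$ forces both $p$ and $q$ even, which $(3,1)$ violates, so this possibility is vacuous. Thus irreducibility yields the $\mathbf{SO}(3,1)$ conclusion.

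In the reducible case, let $V$ be a proper nonzero invariant subspace. If $V$ is degenerate for the form then $V\cap V^{\perp}$ is an invariant isotropic line, and since the Witt index of $(3,1)$ is $1$ this is a $\Gamma$-fixed point of $\partial\ch{3}$, contradicting non-elementarity; hence $\C^{4}=V\oplus V^{\perp}$ orthogonally. Reading off signatures, the only decompositions compatible with non-elementarity are $(2,0)\oplus(1,1)$ and $(1,0)\oplus(2,1)$, since any negative-definite factor produces a fixed interior point. The engine for this case is the remark that for a loxodromic $\gamma$ with attracting eigenvalue $re^{i\chi}$, $r>1$, reality of $\operatorname{tr}\gamma^{n}$ for all $n$ together with boundedness of the unit-modulus eigenvalues forces $r^{n}\sin(n\chi)$ to stay bounded, whence $\chi\equiv0\pmod\pi$: every loxodromic has real boundary eigenvalues $\pm r,\pm r^{-1}$ and a conjugate pair $e^{\pm i\phi}$ on the orthogonal directions. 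In the $(2,0)\oplus(1,1)$ case this gives $\det(\gamma|_{V})=e^{2i\chi}=1$ and $\det(\gamma|_{V^{\perp}})=1$ for every loxodromic; as loxodromics generate a non-elementary group (or, for a stray $\gamma$, multiply it by a loxodromic keeping the product loxodromic), the determinant character $\gamma\mapsto\det(\gamma|_{V^{\perp}})$ is trivial, so $\Gamma\subset\mathbf{SU}(1,1)\times\mathbf{SU}(2)$.

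The main obstacle is the remaining $(1,0)\oplus(2,1)$ case, where $\Gamma$ stabilizes a copy of $\ch{2}$. Here reality of the trace no longer controls the unit-modulus eigenvalue on the invariant positive line, so I would pass to the induced $\mathbf{PU}(2,1)$-action on $\ch{2}$ and invoke the two-dimensional theorem of Fu--Li--Wang \cite{FLW}: after absorbing the determinant twist the induced group has real traces in $\mathbf{SU}(2,1)$, hence is conjugate into $\mathbf{SO}(2,1)$ or stabilizes a complex geodesic. Reattaching the invariant line and matching determinants then deposits $\Gamma$ into $\mathbf{SO}(3,1)$ or $\mathbf{SU}(1,1)\times\mathbf{SU}(2)$, respectively. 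The delicate points are exactly that the determinant twist can be normalized so that \cite{FLW} applies verbatim, and that the resulting real form or complex geodesic reassembles with the extra invariant line into one of the two stated subgroups; I expect this sub-case to be the crux of the whole argument.
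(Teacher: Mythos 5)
Your route is genuinely different from the paper's: the paper never splits into irreducible/reducible cases, but instead diagonalizes one loxodromic $A$ with eigenvalues $u,e^{i\theta},e^{-i\theta},1/u$, extracts reality of individual matrix entries of every $B\in\Gamma$ from the reality of $tr(A^kB)$, and then branches on whether the $(1,4)$ and $(4,1)$ entries of a second loxodromic are real or purely imaginary (Lemma \ref{lem:2.2}), landing directly in $\mathbf{SU}(1,1)\times\mathbf{SU}(2)$ or producing an invariant totally real subspace. Your irreducible case (Frobenius--Schur indicator, real versus quaternionic structure, parity of the signature killing the quaternionic option) and your $(2,0)\oplus(1,1)$ case (growth of $r^{n}\sin(n\chi)$ forcing real loxodromic eigenvalues, then triviality of the determinant character) are sound in outline, modulo the standard but unstated verification that the invariant real structure is compatible with the Hermitian form so that its restriction to the real locus is a real symmetric form of signature $(3,1)$.

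However, there is a genuine gap exactly where you flag it: the $(1,0)\oplus(2,1)$ case, i.e.\ $\Gamma$ preserving a positive complex line $L$ and acting (say irreducibly) on $L^{\perp}\cong\C^{2,1}$. Your plan to ``absorb the determinant twist'' and invoke Fu--Li--Wang does not go through as stated. The restriction $\gamma\mapsto\gamma|_{L^{\perp}}$ lands in $\mathbf{U}(2,1)$ with $\det(\gamma|_{L^{\perp}})=\chi(\gamma)^{-1}$, where $\chi(\gamma)$ is the unit eigenvalue on $L$; rescaling by a cube root of $\chi(\gamma)$ to reach $\mathbf{SU}(2,1)$ both suffers a $\mathbb Z/3$ ambiguity (so need not define a homomorphism) and, more seriously, produces traces $\chi(\gamma)^{1/3}\bigl(tr(\gamma)-\chi(\gamma)\bigr)$ that have no reason to be real: the hypothesis only controls $tr(\gamma|_{L^{\perp}})+\chi(\gamma)$, and for a loxodromic with eigenvalues $\pm r,\pm r^{-1},e^{i\beta},e^{-i\beta}$ the reality of all $tr(\gamma^{n})$ is already satisfied with no constraint tying $e^{\pm i\beta}$ to $L$ versus $L^{\perp}$. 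This sub-case is not vacuous (a Zariski-dense subgroup of $\mathbf{SO}(2,1)\subset\mathbf{SO}(3,1)$ realizes it), so it must be closed, and the reduction to \cite{FLW} is precisely the missing argument rather than a routine normalization. This is the step the paper's entry-by-entry computation (Case II, producing the invariant totally real span of $\Gamma\cdot(0,0,0,1)^{t}$ and hence conjugacy into $\mathbf S(\mathbf O(2,1)\times\mathbf O(1))$ or $\mathbf{SO}(3,1)$) handles and your proposal does not.
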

The rest of this paper is organized as follows. In $\S$2, we give some necessary preliminaries on complex hyperbolic spaces and in $\S$3, we prove the main theorem.

\section{Preliminaries}
\subsection{Complex hyperbolic space}
Let $\C^{n,1}$ be a $(n+1)$-complex vector space with a
Hermitian form of signature $(n,1)$. An element of $\C^{n,1}$ is a
column vector $z=(z_1,\ldots,z_{n+1})^t$. Throughout this paper, we choose
the second Hermitian form on $\C^{n,1}$ given by the matrix $J$
$$
J=\left[\begin{matrix} 0 & 0 & 1 \\ 0 & I_{n-1} & 0 \\ 1 & 0 & 0
\end{matrix}\right].
$$
Thus $\<z,w\>=w^*Jz=\overline{w}^tJz=z_1\overline{w}_{n+1}+z_2\overline{w}_2+\cdots+z_n\overline{w}_n+z_{n+1}\overline{w}_1$,
where $z=(z_1,\ldots,z_{n+1})^t, \ w=(w_1,\ldots,w_{n+1})^t \in \C^{n,1}$.

Recall that the \emph{Heisenberg group} is $\mathfrak{N}=\C^{n-1}\times\R$ with the group law
$$
(z,u)(z',u')=(z+z',u+u'+2\mathrm{Im}\langle\langle z,\overline{z}' \rangle\rangle),
$$
where $\langle\langle \ , \ \rangle\rangle$ is the standard Hemitian
product on $\C^{n-1}$. One model of a complex hyperbolic space
$\ch{n}$, which matches the second Hermitian form is the \new{Siegel
domain $\mathfrak{S}$}, which is parametrized in horospherical
coordinates by $\mathfrak{N} \times \R_{+}$,
$$
\psi:(z,u,v)\mapsto \left[\begin{matrix}
-\langle\langle z,z \rangle\rangle-v+iu
\\ \sqrt{2}z \\ 1\end{matrix}\right] \hbox{ for }
(z,u,v)\in\overline{\mathfrak{S}}-\{\infty\} \hbox{   ; }
\psi:\infty\mapsto\left[\begin{matrix} 1
\\ 0 \\ \vdots \\ 0\end{matrix}\right],
$$
where $\infty$ is a distinguished point at infinity. The boundary of
$\mathfrak{S}$ is given by $\mathfrak{N} \cup \{\infty\}$.
Furthermore $\psi$ maps $\mathfrak{S}$ homeomorphically to the set
of points $w$ in $\P \C^{n,1}$ with $\langle w,w\rangle<0$ and maps
$\partial \mathfrak{S}$ homeomorphically to the set of points $w$
in $\P \C^{n,1}$ with $\langle w,w\rangle=0$. %We write $\psi(\tilde{z})=z$.

There is a metric on $\mathfrak{S}$ called the Bergman metric and the holomorphic isometry group of $\ch{n}$ with respect to this metric is $\textbf{PU}(n,1)$.
The elements of $\textbf{PU}(n,1)$ are classified by their fixed points. An element $A \in \textbf{PU}(n,1)$ is called \emph{loxodromic} if it fixes exactly two points
of $\partial \ch{n}$, \emph{parabolic} if it fixes exactly one point of $\partial \ch{n}$, and called \emph{elliptic} if it fixes at least one point of $\ch{n}$.

Now let's consider $\textbf{SU}(3,1)$. A general form of an element
$B \in \textbf{SU}(3,1)$ and its inverse are written as
$$
B=\left[\begin{matrix} a & b & c & d \\ e & f & g & h \\ l & m & n & p \\ q & r & s & t
\end{matrix}\right], \ B^{-1}=\left[\begin{matrix} \overline{t} & \overline{h} & \overline{p} & \overline{d} \\ \overline{r} & \overline{f} & \overline{m} & \overline{b} \\ \overline{s} & \overline{g} & \overline{n} & \overline{c} \\ \overline{q} & \overline{e} & \overline{l} & \overline{a}
\end{matrix}\right].
$$
Then, from $BB^{-1}=B^{-1}B=I$, we get the following identities.
$$\begin{array}{lll}
a\overline{t}+b\overline{r}+c\overline{s}+d\overline{q}=1,& a\overline{h}+b\overline{f}+c\overline{g}+d\overline{e}=0,& a\overline{p}+b\overline{m}+c\overline{n}+d\overline{l}=0, \\ a\overline{d}+|b|^2+|c|^2+d\overline{a}=0,&
e\overline{t}+f\overline{r}+g\overline{s}+h\overline{q}=0,& e\overline{h}+|f|^2+|g|^2+h\overline{e}=1, \\
e\overline{p}+f\overline{m}+g\overline{n}+h\overline{l}=0,& l\overline{t}+m\overline{r}+n\overline{s}+p\overline{q}=0,&
l\overline{p}+|m|^2+|n|^2+p\overline{l}=1,\\ q\overline{t}+|r|^2+|s|^2+t\overline{q}=0,&
\overline{t}a+\overline{h}e+\overline{p}l+\overline{d}q=1,& \overline{t}b+\overline{h}f+\overline{p}m+\overline{d}r=0,\\
\overline{t}c+\overline{h}g+\overline{p}n+\overline{d}s=0,& \overline{t}d+|h|^2+|p|^2+\overline{d}t=0,&
\overline{r}a+\overline{f}e+\overline{m}l+\overline{b}q=0,\\ \overline{r}b+|f|^2+|m|^2+\overline{b}r=1,&
\overline{r}c+\overline{f}g+\overline{m}n+\overline{b}s=0,& \overline{s}a+\overline{g}e+\overline{n}l+\overline{c}q=0, \\ \overline{s}c+|g|^2+|n|^2+\overline{c}s=1, & \overline{q}a+|e|^2+|l|^2+\overline{a}q=0. &
\end{array}
$$

The following lemmas are needed for us.
\begin{lemma}[Lemma 5.3 in \cite{GP}]\label{lem:2.1}
Let $B$ in $\mathbf{SU}(3,1)$ be such that the trace of $B$ is real. Then the characteristic polynomial of $B$ is self-dual.
\end{lemma}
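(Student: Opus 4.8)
The plan is to distill the single structural fact about eigenvalues that membership in $\mathbf{SU}(3,1)$ provides, and then to see that reality of the trace is exactly the extra input needed to make the characteristic polynomial palindromic. Throughout, by \emph{self-dual} I mean that the monic characteristic polynomial $p$ coincides with its reciprocal, i.e. $x^4 p(1/x) = p(x)$; equivalently the roots of $p$ are invariant under $\lambda \mapsto \lambda^{-1}$, equivalently the coefficient sequence of $p$ is palindromic.

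First I would record the relation coming from the unitarity of $B$. Since $B$ preserves the Hermitian form $J$ fixed above, we have $B^{*}JB = J$, and because this particular $J$ satisfies $J^2 = I$, the relation rearranges to $B^{-1} = JB^{*}J$. Hence $B^{-1}$ is conjugate to $B^{*}$ and the two share the same eigenvalues. Writing $\lambda_1,\ldots,\lambda_4$ for the eigenvalues of $B$, the eigenvalues of $B^{-1}$ are the $\lambda_i^{-1}$ while those of $B^{*}$ are the $\overline{\lambda_i}$; therefore the multisets $\{\lambda_i^{-1}\}$ and $\{\overline{\lambda_i}\}$ coincide. This single symmetry is the only geometric fact I will use.

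Next I would turn this into relations among the coefficients $c_k = e_k(\lambda_1,\ldots,\lambda_4)$ of $p(x) = x^4 - c_1 x^3 + c_2 x^2 - c_3 x + c_4$. From $\det B = 1$ one gets $c_4 = \prod_i \lambda_i = 1$. Writing $c_3 = c_4\sum_i \lambda_i^{-1} = \sum_i \lambda_i^{-1}$ and invoking $\{\lambda_i^{-1}\}=\{\overline{\lambda_i}\}$ yields $c_3 = \sum_i \overline{\lambda_i} = \overline{\operatorname{tr}(B)} = \overline{c_1}$. The same elementary-symmetric manipulation applied to $e_2$ (using $e_2(\lambda^{-1}) = e_2(\lambda)/e_4(\lambda)$) shows $c_2 = \overline{c_2}$, so $c_2$ is automatically real. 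At this point the hypothesis enters cleanly: if $\operatorname{tr}(B) = c_1$ is real, then $c_3 = \overline{c_1} = c_1$, so that $p(x) = x^4 - c_1 x^3 + c_2 x^2 - c_1 x + 1$ has real palindromic coefficients, and a direct check gives $x^4 p(1/x) = p(x)$, i.e. $p$ is self-dual.

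I do not anticipate a genuine obstacle; essentially all the content sits in correctly exploiting the identity $B^{-1} = JB^{*}J$. The one point that deserves care is the precise meaning of ``self-dual.'' One must check that the relevant condition is the \emph{non-conjugated} reciprocal identity $c_1 = c_3$, because the \emph{conjugated} version $p(x) = x^4\,\overline{p(1/\bar x)}$ holds for \emph{every} element of $\mathbf{SU}(3,1)$ and would render the reality hypothesis vacuous. Pinning down this distinction and reading off $c_3 = \overline{c_1}$ is the crux of the argument; reality of the trace then supplies exactly the missing equality $c_1 = \overline{c_1}$.
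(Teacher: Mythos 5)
Your proof is correct. Note that the paper itself offers no proof of this lemma; it is quoted verbatim as Lemma 5.3 of \cite{GP}, and your argument --- deriving $B^{-1}=JB^{*}J$ from $B^{*}JB=J$ and $J^{2}=I$, deducing that the eigenvalue multiset is closed under $\lambda\mapsto 1/\overline{\lambda}$ so that $c_{4}=1$, $c_{2}=\overline{c_{2}}$ and $c_{3}=\overline{c_{1}}$, and then using reality of the trace to get the palindromic identity $c_{1}=c_{3}$ --- is precisely the standard proof given in that reference, including the correct reading of ``self-dual'' as the non-conjugated reciprocal condition.
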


\begin{lemma}[Proposition 2.2 in \cite{Kim}]\label{lem:2.2}
For two nonzero complex numbers $a$ and $b$, if $ab$ and $a\overline{b}$ are all real, then either $a$ and $b$ are real or $a$ and $b$ are purely imaginary.
\end{lemma}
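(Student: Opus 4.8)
The plan is to reformulate the two reality hypotheses as conjugation identities and then to work with the unimodular ratios $a/\overline a$ and $b/\overline b$. Since $a$ and $b$ are nonzero, these ratios are well-defined complex numbers of modulus one, and a complex number $z$ is real exactly when $z/\overline z = 1$ and purely imaginary exactly when $z/\overline z = -1$. Thus the whole statement reduces to showing that $a/\overline a$ and $b/\overline b$ are equal and belong to $\{1,-1\}$.

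First I would write out what the hypotheses say. Because $ab \in \R$ we have $ab = \overline{ab} = \overline a\,\overline b$, and because $a\overline b \in \R$ we have $a\overline b = \overline{a\overline b} = \overline a\, b$. Setting $\alpha = a/\overline a$ and $\beta = b/\overline b$, dividing the first identity by the nonzero quantity $\overline a\,\overline b$ gives $\alpha\beta = 1$, while dividing the second identity by $\overline a\,\overline b$ gives $\alpha = \beta$. Combining these two relations yields $\alpha^2 = \alpha\beta = 1$, so $\alpha = \pm 1$, and then $\beta = \alpha$ forces $\beta$ to carry the same sign. If $\alpha = \beta = 1$ then $a = \overline a$ and $b = \overline b$, so both are real; if $\alpha = \beta = -1$ then $a = -\overline a$ and $b = -\overline b$, so both are purely imaginary. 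This is precisely the claimed dichotomy.

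The only point requiring care — and the place where the hypothesis on $ab$, rather than on $a\overline b$ alone, is essential — is excluding the mixed possibility that one of $a,b$ is real while the other is purely imaginary. This case is ruled out automatically, since it would force $\alpha$ and $\beta$ to have opposite signs, contradicting $\alpha = \beta$; equivalently, $ab$ would then be a nonzero purely imaginary number, contradicting $ab \in \R$. As there is no genuine obstacle beyond combining the two conjugation identities, the argument is complete. (An entirely equivalent route writes $a = r_a e^{i\theta}$ and $b = r_b e^{i\phi}$ with $r_a,r_b>0$; the hypotheses become $\theta+\phi \equiv 0$ and $\theta-\phi \equiv 0 \pmod{\pi}$, whence $2\theta \equiv 2\phi \equiv 0 \pmod{\pi}$ and $\theta \equiv \phi \pmod{\pi}$, giving the same conclusion.)
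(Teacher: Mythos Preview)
Your proof is correct. The paper does not give its own proof of this lemma; it simply cites Proposition~2.2 of \cite{Kim} and moves on, so there is nothing to compare your argument against within the present paper. For what it is worth, the standard argument in the cited source is essentially your polar-coordinate variant: writing $a=r_ae^{i\theta}$ and $b=r_be^{i\phi}$, the hypotheses force $\theta+\phi$ and $\theta-\phi$ to lie in $\pi\mathbb Z$, hence $\theta,\phi\in\frac{\pi}{2}\mathbb Z$ with the same parity. Your unimodular-ratio formulation via $\alpha=a/\overline a$ and $\beta=b/\overline b$ is a clean coordinate-free repackaging of the same idea and avoids any case analysis on angles.
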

Note that $0$ is both a purely real and purely imaginary number.

\subsection{Cartan angular invariant}
The Cartan angular invariant is a well-known invariant in complex hyperbolic geometry, and here we give the definition and some properties which will be used in the proof of the main theorem. For more details, see \cite{Go}.

The Cartan angular invariant $\mathbb{A}(x)$ of a triple $x=(x_1,x_2,x_3) \in (\partial\ch{n})^3$ is defined to be
$$
\mathbb{A}(x)=\arg(-\langle \tilde{x}_1,\tilde{x}_2 \rangle\langle \tilde{x}_2,\tilde{x}_3 \rangle\langle \tilde{x}_3,\tilde{x}_1 \rangle),
$$
where $\tilde{x}_1,\tilde{x}_2,\tilde{x}_3$ are lifts of $x_1,x_2,x_3$ respectively. Then $\mathbb{A}(x)$ is independent of the choice of the lifts and $-\pi/2 \leq \mathbb{A}(x) \leq \pi/2$. Furthermore, $\mathbb{A}(x)$ is invariant under permutations of the points $x_i$ up to sign. %An important fact on Cartan angular invariant worth taking note of for our purposes is the following.
\begin{proposition}
A triple $x=(x_1,x_2,x_3) \in (\partial\ch{n})^3$ lies in the boundary of a complex line if and only if $\mathbb{A}(x)=\pm\pi/2$, and lies in the boundary of a Lagrangian plane if and only if $\mathbb{A}(x)=0$.
\end{proposition}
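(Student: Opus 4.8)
The plan is to reduce to a normal form using the isometry group and then read the invariant off from an explicit computation. First I would record the two facts that make $\mathbb{A}$ well defined: for $A\in\mathbf{SU}(n,1)$ one has $\langle A\tilde x_i,A\tilde x_j\rangle=\langle\tilde x_i,\tilde x_j\rangle$, so $\mathbb{A}$ is $\mathbf{PU}(n,1)$-invariant; and rescaling a lift $\tilde x_i\mapsto\lambda_i\tilde x_i$ multiplies the triple product $\langle\tilde x_1,\tilde x_2\rangle\langle\tilde x_2,\tilde x_3\rangle\langle\tilde x_3,\tilde x_1\rangle$ by $|\lambda_1\lambda_2\lambda_3|^2>0$, leaving its argument unchanged. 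Since $\mathbf{PU}(n,1)$ acts transitively on ordered pairs of distinct points of $\partial\ch{n}$, I would move $x_1$ to $\infty$ and $x_2$ to the origin $o$ of the Heisenberg group, so that $\tilde x_1=e_1=(1,0,\dots,0)^t$ and $\tilde x_2=e_{n+1}=(0,\dots,0,1)^t$.

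Writing a lift of the third point as $\tilde x_3=(w_1,\dots,w_{n+1})^t$, the corner structure of $J$ gives $\langle\tilde x_1,\tilde x_2\rangle=1$, $\langle\tilde x_2,\tilde x_3\rangle=\overline{w_1}$ and $\langle\tilde x_3,\tilde x_1\rangle=w_{n+1}$, so the triple product is simply $P=\overline{w_1}\,w_{n+1}$, while the null condition $\langle\tilde x_3,\tilde x_3\rangle=0$ reads $2\,\mathrm{Re}(\overline{w_1}w_{n+1})+\sum_{j=2}^{n}|w_j|^2=0$. Distinctness of the three points forces $w_1\neq0$ and $w_{n+1}\neq0$ (distinct isotropic lines are non-orthogonal), so $P\neq0$, $\mathbb{A}(x)=\arg(-P)$ is defined, and $\mathrm{Re}(-P)=\tfrac12\sum_{j=2}^{n}|w_j|^2\geq0$.

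Both equivalences then drop out. For the complex line, the plane $\mathrm{span}_{\C}(e_1,e_{n+1})$ carries a form of signature $(1,1)$ and is the unique complex line whose boundary contains both $x_1$ and $x_2$; hence $x_1,x_2,x_3$ lie on the boundary of a complex line exactly when $\tilde x_3\in\mathrm{span}_{\C}(e_1,e_{n+1})$, i.e. $w_2=\dots=w_n=0$, which by the displayed identity is equivalent to $\mathrm{Re}(-P)=0$, i.e. $-P$ purely imaginary, i.e. $\mathbb{A}(x)=\pm\pi/2$. For the Lagrangian plane, $\mathbb{A}(x)=0$ means $-P$ is a positive real, equivalently $P=\overline{w_1}w_{n+1}$ is a negative real; scaling $\tilde x_3$ so that $w_{n+1}=1$ this says $w_1<0$ is real, and then applying the residual $\mathbf{U}(n-1)$ in the stabilizer of $(\infty,o)$, represented by $\mathrm{diag}(1,U,1)$ which fixes $e_1,e_{n+1}$, I can rotate $(w_2,\dots,w_n)$ to $(\rho,0,\dots,0)$ with $\rho\geq0$ real. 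All of $\tilde x_1,\tilde x_2,\tilde x_3$ are then real vectors, so the three points lie on the boundary of the standard Lagrangian plane (the real locus, fixed by $z\mapsto\bar z$). Conversely, any Lagrangian plane is $\mathbf{PU}(n,1)$-equivalent to this one, on which the three points admit real lifts, making $P$ real and, since $\mathrm{Re}(P)\le0$, giving $\mathbb{A}(x)=0$.

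The only genuinely nonroutine content is in the two converse directions, and it is precisely what the normalization absorbs; the main thing to justify carefully is that the stabilizer reduction is legitimate, namely that conjugation by $\mathbf{PU}(n,1)$ preserves $\mathbb{A}$ and carries complex lines to complex lines and Lagrangian planes to Lagrangian planes, and that the $\mathbf{U}(n-1)$ factor indeed suffices to realify the middle coordinates in the Lagrangian case, together with the standard identifications ``boundary of a complex line $\leftrightarrow$ isotropic lines of a $(1,1)$-subspace'' and ``boundary of a Lagrangian plane $\leftrightarrow$ real isotropic lines''. Once these are in place the computation above is immediate; alternatively the complex-line implication can be checked directly by parametrizing the chain as $\tilde x_j=(1,e^{i\theta_j})$ in a $(1,1)$-plane, where the triple product equals $8i\sin\tfrac{\theta_1-\theta_2}{2}\sin\tfrac{\theta_2-\theta_3}{2}\sin\tfrac{\theta_3-\theta_1}{2}$, which is manifestly purely imaginary.
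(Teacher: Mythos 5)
The paper offers no proof of this proposition: it is stated as a known property of the Cartan angular invariant with a pointer to Goldman \cite{Go}, so there is nothing in the paper to compare against line by line. Your argument is correct and is essentially the standard proof of Cartan's theorem as found there: normalize $(x_1,x_2)$ to $(\infty,\mathbf 0)$ using transitivity of $\mathbf{PU}(n,1)$ on distinct boundary pairs, check that rescaling lifts multiplies the triple product by a positive number, observe that with the form $J$ the triple product reduces to $P=\overline{w_1}\,w_{n+1}$ with $\mathrm{Re}(-P)=\tfrac12\sum_{j=2}^{n}|w_j|^2\geq 0$ by isotropy of $\tilde x_3$, note $P\neq 0$ because distinct isotropic lines in signature $(n,1)$ cannot be orthogonal, and read off both equivalences. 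Two small points to tighten. First, in the Lagrangian direction your normalization places all three lifts in $\R^{n+1}$, whose projectivization meets $\ch{n}$ in a totally real copy of real hyperbolic $n$-space, which is not a plane when $n>2$; you need the (easy) additional remark that the three real lifts span a $3$-dimensional totally real subspace of signature $(2,1)$, whose projectivization is a genuine Lagrangian $2$-plane containing the triple in its boundary (here one uses that $\rho>0$, i.e.\ the lifts are linearly independent, which holds since $\mathbb{A}(x)=0$ excludes $\mathbb{A}(x)=\pm\pi/2$). Second, the closing parametrization $\tilde x_j=(1,e^{i\theta_j})$ of a chain consists of isotropic vectors for the first (diagonal) Hermitian form, not for the form $J$ used in this paper; since the two forms are equivalent and $\mathbb{A}$ is defined invariantly this is harmless, but it deserves a sentence. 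Neither issue affects the correctness of the overall argument.
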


\section{Proof of the main Theorem}
The ``if" part is clear because any element of $\textbf{SO}(3,1)$ or $\textbf{SU}(1,1)\times\textbf{SU}(2)$ has real trace, so we will prove the ``only if" part.

%It is well-known that a non-elementary Kleinian group contains infinitely many loxodromic elements(See \cite{Ma} or \cite{FLW}). Now let $A$ be a loxodromic element fixing $0$ and $\infty$, and $B$ an arbitrary loxodromic element with distinct fixed point in $G$.

%Here, if $dq=0$, it is easily checked that $B(0)=0$ or $B(\infty)=\infty$, so $dq \neq 0$.

It is well-known that a non-elementary Kleinian group contains
infinitely many loxodromic elements(See \cite{Ma} or \cite{FLW}).
Now let $A$ be a loxodromic element fixing $\mathbf 0$ and $\infty$ where $\mathbf 0$ and $\infty$ denote the points of $\partial\ch{3}$ represented by $(0,0,0,1)$ and $(1,0,0,0)$ respectively. In
terms of matrices, due to the Lemma \ref{lem:2.1}, we can write
$$A=\left[\begin{matrix} u & 0 & 0 & 0 \\ 0 & e^{i\theta} & 0 & 0 \\
0 & 0 & e^{-i\theta} & 0 \\ 0 & 0 & 0 & 1/u \end{matrix} \right],$$
where $u>1$. Up to conjugacy, we can assume that $A\in \Gamma$.

\begin{lemma}\label{lem:3.1}
If $B=\left[\begin{matrix} a & b & c & d \\ e & f &
g & h \\ l & m & n & p \\ q & r & s & t \end{matrix}\right]$ is an arbitrary element of $\Gamma$, then $a$, $t$, and $f+n$ are real.
\end{lemma}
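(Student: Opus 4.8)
The plan is to extract information from the hypothesis $\operatorname{tr}(\gamma)\in\R$ by applying it not to $B$ alone but to the whole family $A^kB$, $k\in\mathbb{Z}$, all of which lie in $\Gamma$ since $A,B\in\Gamma$. Because $A$ is diagonal, $A^k=\operatorname{diag}(u^k,e^{ik\theta},e^{-ik\theta},u^{-k})$, so the diagonal entries of $A^kB$ are simply $u^ka,\ e^{ik\theta}f,\ e^{-ik\theta}n,\ u^{-k}t$. Hence
$$
\operatorname{tr}(A^kB)=u^k a+e^{ik\theta}f+e^{-ik\theta}n+u^{-k}t\in\R\qquad\text{for all }k\in\mathbb{Z}.
$$

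Writing $a=a_1+ia_2$, $t=t_1+it_2$, $f=f_1+if_2$, $n=n_1+in_2$ and taking imaginary parts, each trace condition collapses to a single real equation
$$
u^k a_2+u^{-k}t_2+\cos(k\theta)\,(f_2+n_2)+\sin(k\theta)\,(f_1-n_1)=0,\qquad k\in\mathbb{Z}.
$$
The key structural observation is that the four coefficient-functions $k\mapsto u^k$, $u^{-k}$, $\cos k\theta$, $\sin k\theta$ have radically different growth: since $u>1$ the first two grow (resp.\ decay) exponentially, while the trigonometric terms stay bounded in $[-1,1]$.

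I would then separate the coefficients by asymptotics. Letting $k\to+\infty$ and dividing the displayed equation by $u^k$, every term except $a_2$ tends to $0$, forcing $a_2=0$, so $a$ is real. Letting $k\to-\infty$ (equivalently replacing $k$ by $-k$ and sending $k\to+\infty$) makes the $u^{|k|}t_2$ term dominant, and the same reasoning forces $t_2=0$, so $t$ is real. Finally I return to the original trace condition $\operatorname{tr}(B)=a+f+n+t\in\R$, which gives $a_2+f_2+n_2+t_2=0$; using $a_2=t_2=0$ I conclude $f_2+n_2=\operatorname{Im}(f+n)=0$, i.e.\ $f+n$ is real.

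The main, and essentially only, obstacle is making the separation-by-growth step airtight: one must check that the bounded oscillatory contributions and the subdominant exponential genuinely become negligible after dividing by the dominant exponential factor, which is immediate once the equation is organized as above. Note that no case distinction on the value of $\theta$ is needed for this particular lemma, since the reality of $f+n$ follows directly from $\operatorname{tr}(B)$ once $a_2$ and $t_2$ are eliminated; the finer constraint $f_1=n_1$ lurking in the trigonometric part is not required here and can be set aside.
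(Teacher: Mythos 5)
Your proof is correct, but it takes a genuinely different route from the paper's. The paper works with only the four auxiliary elements $AB$, $A^{-1}B$, $A^{2}B$, $A^{-2}B$: it first solves a $2\times 2$ linear system in $(a+t)$ and $(f+n)$ whose determinant is controlled by the inequality $u+\tfrac{1}{u}>2\cos\theta$, and then performs a second elimination (combining $\bigl(u-\tfrac{1}{u}\bigr)\operatorname{tr}(B)+\operatorname{tr}(AB)-\operatorname{tr}(A^{-1}B)$ with its analogue for $A^{2}$) to isolate $i(f-n)\sin\theta$ and hence $a$. You instead use the entire family $A^{k}B$, $k\in\mathbb{Z}$, take imaginary parts, and separate the unknowns $a_2$, $t_2$ by the asymptotic growth of $u^{k}$ versus the bounded trigonometric terms as $k\to\pm\infty$. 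Both arguments hinge on exactly the same structural fact, namely $u>1$ (which is what makes the paper's determinant nonzero and what makes your exponential dominate), and both legitimately use that $A^{k}B\in\Gamma$ for all $k$. Your version buys a cleaner conceptual picture and entirely avoids the paper's somewhat fiddly second elimination, at the cost of invoking infinitely many group elements and a limit where the paper gets by with a finite, purely algebraic computation; the limiting step you flag as the "only obstacle" is indeed immediate once the equation is divided by $u^{k}$, since $u^{-2k}t_2\to 0$ and the trigonometric contribution is bounded. As a side remark, your displayed equation with $a_2=t_2=0$ also yields $\cos(k\theta)(f_2+n_2)+\sin(k\theta)(f_1-n_1)=0$ for all $k$, which recovers the reality of $f+n$ directly at $k=0$ without even returning to $\operatorname{tr}(B)$, exactly as you note.
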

\begin{proof}
Since the trace of every element in $\Gamma$ is real, $tr(B)$ and $tr(AB)+tr(A^{-1}B)$ are real.
\begin{eqnarray*}
tr(B)&=&a+t+f+n, \\
tr(AB)+tr(A^{-1}B)&=&\left(u+\frac{1}{u}\right)(a+t)+2\cos\theta(f+n).
\end{eqnarray*}
Solving for $(a+t)$ and $(f+n)$, since
$\displaystyle{u+\frac{1}{u} > 2\cos\theta}$, we get that $a+t$ and $f+n$
are real. Now consider
\begin{eqnarray*} \lefteqn{\left(u-\frac{1}{u}\right)tr(B)+tr(AB)-tr(A^{-1}B)} \\  & & \ \ \ \ \ \ \ \ \ = 2\left(u-\frac{1}{u}\right)a+\left(u-\frac{1}{u}\right)(f+n)+2i(f-n)\sin\theta. \end{eqnarray*}
Since $(f+n)$ is real,
$\displaystyle{\left(u-\frac{1}{u}\right)a+i(f-n)\sin\theta =: y_1 \in \R}$.

Similarly, by considering
\begin{eqnarray*} \lefteqn{\left(u^2-\frac{1}{u^2}\right)tr(B)+tr(A^2B)-tr(A^{-2}B)} \\  & & \ \ \ \ \ \ \ \ \ =2\left(u^2-\frac{1}{u^2}\right)a+\left(u^2-\frac{1}{u^2}\right)(f+n)+2i(f-n)\sin2\theta,\end{eqnarray*}
we have $\displaystyle{\left(u^2-\frac{1}{u^2}\right)a+2i(f-n)\sin\theta\cos\theta=:y_2
\in \R}$.
Hence,
$$\displaystyle{\left(u+\frac{1}{u}\right)y_1-y_2=i(f-n)\sin\theta\left(u+\frac{1}{u}-2\cos\theta\right)
\in \R}.$$ Since $\displaystyle{u+\frac{1}{u} > 2\cos\theta}$,
$i(f-n)\sin\theta=:y_3$ is real, so
$\displaystyle{\left(u-\frac{1}{u}\right)a}=y_1-y_3$ is real and so $a$ is
real. Since $a+t$ is real, $t$ is also real.
\end{proof}

\begin{lemma}\label{lem:3.2}
Consider the matrices $A, B_1, B_2$ in $\mathbf{SU}(3,1)$.
$$
A=\left[\begin{matrix} u & 0 & 0 & 0 \\ 0 & e^{i\theta} & 0 & 0 \\ 0 & 0 & e^{-i\theta} & 0 \\ 0 & 0 & 0 & 1/u \end{matrix} \right], B_1=\left[\begin{matrix} a_1 & b_1 & c_1 & d_1 \\ e_1 & f_1 & g_1 & h_1 \\ l_1 & m_1 & n_1 & p_1 \\ q_1 & r_1 & s_1 & t_1 \end{matrix}\right], B_2=\left[\begin{matrix} a_2 & b_2 & c_2 & d_2 \\ e_2 & f_2 & g_2 & h_2 \\ l_2 & m_2 & n_2 & p_2 \\ q_2 & r_2 & s_2 & t_2 \end{matrix}\right],
$$
where $u>1$. Suppose that $A, B_1$ and $B_2$ are in $\Gamma$. Then $b_1e_2+c_1l_2, d_1q_2, r_1h_2+s_1p_2, q_1d_2, e_1b_2+l_1c_2+h_1r_2+p_1s_2, f_1f_2+g_1m_2+m_1g_2+n_1n_2$ are all real.
\end{lemma}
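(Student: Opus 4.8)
The plan is to use that $\Gamma$ is a group, so that $B_1 A^k B_2 \in \Gamma$ for every integer $k$, and to feed each of these products into Lemma~\ref{lem:3.1}. Since $A^k=\mathrm{diag}(u^k,e^{ik\theta},e^{-ik\theta},u^{-k})$, a direct multiplication gives the three diagonal quantities that Lemma~\ref{lem:3.1} forces to be real. Explicitly, the $(1,1)$ entry of $B_1A^kB_2$ is
\[
a_1a_2\,u^k + b_1e_2\,e^{ik\theta} + c_1l_2\,e^{-ik\theta} + d_1q_2\,u^{-k},
\]
the $(4,4)$ entry is
\[
q_1d_2\,u^k + r_1h_2\,e^{ik\theta} + s_1p_2\,e^{-ik\theta} + t_1t_2\,u^{-k},
\]
and the sum of the $(2,2)$ and $(3,3)$ entries is
\[
(e_1b_2+l_1c_2)\,u^k + (f_1f_2+m_1g_2)\,e^{ik\theta} + (g_1m_2+n_1n_2)\,e^{-ik\theta} + (h_1r_2+p_1s_2)\,u^{-k}.
\]
By Lemma~\ref{lem:3.1} applied to $B_1A^kB_2\in\Gamma$, all three expressions are real for every integer $k$.

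Next I would separate the growing terms from the bounded ones. Since $a_1,a_2,t_1,t_2$ are already real by Lemma~\ref{lem:3.1}, the terms $a_1a_2\,u^k$ and $t_1t_2\,u^{-k}$ are real and can be discarded, leaving $b_1e_2\,e^{ik\theta}+c_1l_2\,e^{-ik\theta}+d_1q_2\,u^{-k}\in\R$ and $q_1d_2\,u^k+r_1h_2\,e^{ik\theta}+s_1p_2\,e^{-ik\theta}\in\R$ for all $k$, together with the full $(2,2)+(3,3)$ expression. The crucial structural point is that the coefficients $u^{k}$ and $u^{-k}$ are real and unbounded (the first as $k\to+\infty$, the second as $k\to-\infty$), whereas each term carrying $e^{\pm ik\theta}$ has modulus bounded independently of $k$. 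Taking imaginary parts, if the coefficient of an unbounded real power had nonzero imaginary part then $\mathrm{Im}$ of the whole expression would be unbounded, contradicting that it vanishes for all $k$. Letting $k\to-\infty$ in the first expression gives $\mathrm{Im}(d_1q_2)=0$; letting $k\to+\infty$ in the second gives $\mathrm{Im}(q_1d_2)=0$; and letting $k\to\pm\infty$ in the third gives $\mathrm{Im}(e_1b_2+l_1c_2)=0$ and $\mathrm{Im}(h_1r_2+p_1s_2)=0$. This already shows $d_1q_2$ and $q_1d_2$ are real and that both $u$-power coefficients of the third expression are real.

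Finally, once these imaginary parts are removed each expression reduces to a purely oscillatory combination $\alpha e^{ik\theta}+\beta e^{-ik\theta}$ that is real for all $k$; evaluating at $k=0$ yields $\mathrm{Im}(\alpha+\beta)=0$. Applied to the three expressions this gives that $b_1e_2+c_1l_2$, that $r_1h_2+s_1p_2$, and that $f_1f_2+m_1g_2+g_1m_2+n_1n_2$ are real. Moreover $e_1b_2+l_1c_2+h_1r_2+p_1s_2$ is real, being the sum of the two real $u$-power coefficients isolated in the previous step. Together these are exactly the six quantities claimed.

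The conceptual content is the use of the loxodromic $A$ to attach the four distinct weights $u^{k}, e^{ik\theta}, e^{-ik\theta}, u^{-k}$ to the four ``diagonal slots'' of the bilinear pairing between entries of $B_1$ and $B_2$, so that the growth gap between the real powers $u^{\pm k}$ and the bounded unimodular factors lets one peel off the terms one group at a time. I expect the main obstacle to be organizational rather than analytic: carrying out the multiplication $B_1A^kB_2$ correctly and tracking which product of entries lands in which diagonal position, since the domination argument itself is elementary once the three displayed expressions are in hand.
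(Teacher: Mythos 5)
Your argument is correct and follows essentially the same route as the paper: both insert powers of $A$ between $B_1$ and $B_2$, read off the $(1,1)$, $(4,4)$, and $(2,2)+(3,3)$ entries of the resulting elements of $\Gamma$, and apply Lemma~\ref{lem:3.1} to separate the coefficients attached to the distinct diagonal weights of $A$. The only difference is in the separation step: the paper uses just $k=0,\pm 1$ and solves a real $2\times 2$ linear system via $u+\tfrac{1}{u}>2\cos\theta$, while you let $k\to\pm\infty$ and use growth domination, which is equally valid and even yields the slightly stronger conclusion that $e_1b_2+l_1c_2$ and $h_1r_2+p_1s_2$ are individually real.
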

\begin{proof}
We already know that $a_1, a_2, t_1, t_2, f_1+n_1, f_2+n_2$ are real by Lemma \ref{lem:3.1}. Since $(1,1)$ entry of $B_1B_2$ and $B_1AB_2+B_1A^{-1}B_2$ are real, $a_1a_2+b_1e_2+c_1l_2+d_1q_2$ and $\displaystyle{\left(u+\frac{1}{u}\right)a_1a_2+2\cos\theta(b_1e_2+c_1l_2)+\left(u+\frac{1}{u}\right)d_1q_2}$ are real, so $(b_1e_2+c_1l_2)+d_1q_2$ and $\displaystyle{2\cos\theta(b_1e_2+c_1l_2)+\left(u+\frac{1}{u}\right)d_1q_2}$ are real. Solving for $d_1q_2$ and $(b_1e_2+c_1l_2)$, we get $d_1q_2$ and $(b_1e_2+c_1l_2)$ are real.

In a similar way, considering $(4,4)$ entry of the same elements of $\Gamma$, we get that $q_1d_2$ and $(r_1h_2+s_1p_2)$ are real. Also, considering the sum of $(2,2)$ entry and $(3,3)$ entry of the same elements of $\Gamma$, we see that $e_1b_2+l_1c_2+h_1r_2+p_1s_2, f_1f_2+g_1m_2+m_1g_2+n_1n_2$ are all real.
\end{proof}

\begin{corollary}
Let $B_1$ and $B_2$ be arbitrary elements of $\Gamma$ as written in Lemma \ref{lem:3.2}.
\begin{itemize}
\item[(a)] Putting $B_1=B_2$ in the lemma we see that $b_1e_1+c_1l_1, d_1q_1, r_1h_1+s_1p_1$ and $f_1^2+n_1^2+2m_1g_1$ are all real.
\item[(b)] Putting $B_2=B_1^{-1}$ in the lemma we see that $b_1\overline{r}_1+c_1\overline{s}_1$ and $d_1\overline{q}_1$ are all real.
\item[(c)] Either $d_1$ and $q_1$ are both real or else they are purely imaginary.
\end{itemize}
\end{corollary}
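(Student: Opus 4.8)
The plan is to obtain all three parts as direct specializations of Lemma \ref{lem:3.2}, combined with Lemma \ref{lem:2.2} for the final dichotomy; no genuinely new computation is required beyond careful bookkeeping of matrix entries.

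For part (a), I would set $B_2 = B_1$ in Lemma \ref{lem:3.2} and rename every subscript-$2$ entry as the corresponding subscript-$1$ entry in the six expressions that the lemma guarantees to be real. The first expression becomes $b_1 e_1 + c_1 l_1$, the second becomes $d_1 q_1$ (and the fourth, $q_1 d_2$, collapses to the same product $q_1 d_1$), the third becomes $r_1 h_1 + s_1 p_1$, and the sixth becomes $f_1^2 + n_1^2 + 2 m_1 g_1$. The fifth expression reduces to $(b_1 e_1 + c_1 l_1) + (r_1 h_1 + s_1 p_1)$, which is already known to be real, so it contributes nothing new. This yields all four claimed reality statements.

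For part (b), I would instead set $B_2 = B_1^{-1}$ and read off the entries of $B_1^{-1}$ from the explicit inverse displayed in the preliminaries; in particular $e_2 = \overline{r}_1$, $l_2 = \overline{s}_1$, and $q_2 = \overline{q}_1$. Substituting into the first two real expressions of Lemma \ref{lem:3.2} turns $b_1 e_2 + c_1 l_2$ into $b_1 \overline{r}_1 + c_1 \overline{s}_1$ and $d_1 q_2$ into $d_1 \overline{q}_1$, both of which are therefore real. The third and fourth expressions merely return the complex conjugates of these two, consistent with reality and giving no further information.

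Part (c) is then immediate: combining (a) and (b) we have both $d_1 q_1 \in \R$ and $d_1 \overline{q}_1 \in \R$, so Lemma \ref{lem:2.2} applied to the pair $(d_1, q_1)$ forces either both numbers to be real or both to be purely imaginary. The only point deserving attention is the degenerate case in which $d_1$ or $q_1$ vanishes, which falls outside the literal hypothesis of Lemma \ref{lem:2.2}; this is absorbed by the convention recorded just after that lemma, namely that $0$ is simultaneously real and purely imaginary. Since every step is a substitution into already-established identities, there is no real obstacle here. The one thing to watch carefully is getting the entry correspondence in the inverse matrix exactly right when performing the substitution in part (b), as a single misread entry would silently corrupt the conclusion.
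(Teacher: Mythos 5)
Your proposal is correct and matches the paper's own treatment exactly: the paper presents (a) and (b) as the very substitutions $B_2=B_1$ and $B_2=B_1^{-1}$ into Lemma \ref{lem:3.2} (using the displayed formula for the inverse in $\mathbf{SU}(3,1)$), and disposes of (c) with the single remark that it follows from (a), (b) and Lemma \ref{lem:2.2}. Your aside about the degenerate case $d_1q_1=0$ is resolved the same way the paper implicitly resolves it, by the stated convention that $0$ is both real and purely imaginary.
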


Part (c) follows from (a), (b) and Lemma \ref{lem:2.2}. By this corollary, we know that for any $B\in \Gamma$, either $(1,4)$ entry and $(4,1)$ entry of $B$ are both real or else they are purely imaginary.

It is easy to check that $\mathbf 0$ and $\infty$ are the fixed points of $A$.
Since a non-elementary complex hyperbolic Kleinian group contains infinitely many loxodromic elements with pairwise distinct axes, there exists a loxodromic element $B_0$ of $\Gamma$ such that the axes of $A$ and $B_0$ are different.
Write $$B_0=\left[\begin{matrix} a_0 & b_0 & c_0 & d_0 \\ e_0 & f_0 & g_0 & h_0 \\ l_0 & m_0 & n_0 & p_0 \\ q_0 & r_0 & s_0 & t_0 \end{matrix}\right].$$
Then we claim that $d_0 q_0 \neq 0$. If $d_0 =0$, then we get $h_0=p_0=0$ from the identity $\overline{t}_0d_0+|h_0|^2+|p_0|^2+\overline{d}_0t_0=0$. This implies $B_0$ fixes $\mathbf 0$. Similarly if $q_0=0$, it can be easily seen that $B_0$ fixes $\infty$. In other words, if $d_0q_0=0$, then $B_0$ fixes either $\mathbf 0$ or $\infty$. This means that $A$ and $B_0$ share one but both fixed points. However the subgroup generated by such $A$ and $B_0$ is not discrete, which contradicts that $\Gamma$ is discrete. Therefore the claim holds.
Now we will consider the following two cases separately.

\smallskip

{\bf Case I}: $d_0$ and $q_0$ are purely imaginary.\\
From the identity $a_0\overline{d}_0+|b_0|^2+|c_0|^2+d_0\overline{a}_0=0$, we have $b_0=c_0=0$ because $a_0\overline{d}_0+d_0\overline{a}_0=0$. Similarly, from identities $\overline{q}_0a_0+|e_0|^2+|l_0|^2+\overline{a}_0q_0=0$, $q_0\overline{t}_0+|r_0|^2+|s_0|^2+t_0\overline{q}_0=0$, and $\overline{t}_0d_0+|h_0|^2+|p_0|^2+\overline{d}_0t_0=0$, we get $e_0=l_0=0$, $r_0=s_0=0$ and $h_0=p_0=0$, respectively.
Hence $$B_0=\left[\begin{matrix} a_0 & 0 & 0 & d_0 \\ 0 & f_0 & g_0 & 0 \\ 0 & m_0 & n_0 & 0 \\ q_0 & 0 & 0 & t_0 \end{matrix} \right],$$ where $a_0, t_0$ are real and $d_0, q_0$ are purely imaginary. Furthermore, since $\det B_0=(a_0t_0-d_0q_0)(f_0n_0-g_0m_0)=1$, we have $f_0n_0-g_0m_0=1$ because $1=a_0\overline{t}_0+b_0\overline{r}_0+c_0\overline{s}_0+d_0\overline{q}_0=a_0t_0-d_0q_0$.

From $\overline B_0^t J B_0=J$, we have
$$\left[\begin{matrix} \overline a_0 & \overline q_0 \\ \overline d_0 & \overline t_0 \end{matrix} \right] \left[\begin{matrix} 0 & 1 \\ 1 & 0 \end{matrix} \right] \left[\begin{matrix} a_0 & d_0 \\ q_0 & t_0 \end{matrix} \right] =\left[\begin{matrix} 0 & 1 \\ 1 & 0 \end{matrix} \right], \ \left[\begin{matrix} \overline f_0 & \overline m_0 \\ \overline g_0 & \overline n_0 \end{matrix} \right] \left[\begin{matrix}  f_0 &  g_0 \\  m_0 &  n_0 \end{matrix} \right]= \left[\begin{matrix} 1 & 0 \\ 0 & 1 \end{matrix} \right],$$
where $a_0t_0-d_0q_0=f_0n_0-g_0m_0=1$.
This implies that $\left[\begin{matrix} a_0 & d_0 \\ q_0 & t_0 \end{matrix} \right] \in \mathbf{SU}(1,1)$ and $\left[\begin{matrix}  f_0 &  g_0 \\  m_0 &  n_0 \end{matrix} \right] \in \mathbf{SU}(2)$. Hence $B_0$ is an element of $\textbf{SU}(1,1)\times\textbf{SU}(2)$.

Now let $B=\left[\begin{matrix} a & b & c & d \\ e & f & g & h \\
l & m & n & p \\ q & r & s & t \end{matrix}\right]$ be any
other element of $\Gamma$. Then $a$ and $t$ are real. By Lemma \ref{lem:3.2}, $dq_0$ is real and so $d$ is purely imaginary
because $q_0$ is a non-zero purely imaginary number. From identities
$a\overline{d}+|b|^2+|c|^2+d\overline{a}=0$ and
$\overline{t}d+|h|^2+|p|^2+\overline{d}t=0$, we get
$b=c=p=h=0$. Similarly, since $d_0q$ is real and $d_0$ is a non-zero purely imaginary number, we have that
$q$ is purely imaginary and using some identities, we get
$e=l=r=s=0$. Using the same arguments as above, we conclude that $B$ is of the form
$$B=\left[\begin{matrix} a & 0 & 0 & d \\ 0 & f & g & 0 \\ 0 &
m & n & 0 \\ q & 0 & 0 & t \end{matrix}
\right],$$ where $at-dq=fn-gm=1$. Thus we can conclude that $\Gamma$ is a subgroup of $\textbf{SU}(1,1)\times\textbf{SU}(2)$ defined by $$\mathbf{SU}(1,1)\times\mathbf{SU}(2) : = \left\{ \left[\begin{matrix} a & 0 & 0 & d \\ 0 & f & g & 0 \\ 0 &
m & n & 0 \\ q & 0 & 0 & t \end{matrix}\right] : \ \left[\begin{matrix} a & d \\ q & t \end{matrix} \right] \in \mathbf{SU}(1,1), \ \left[\begin{matrix}  f &  g \\  m &  n \end{matrix} \right] \in \mathbf{SU}(2) \right\}$$

\smallskip

\textbf{Case II}: $d_0$ and $q_0$ are real.\\
Let $B=\left[\begin{matrix} a & b & c & d \\ e & f & g & h \\
l & m & n & p \\ q & r & s & t \end{matrix}\right]$ be any
other element of $\Gamma$. Then, according to Lemma \ref{lem:3.1}, $a$ and $t$ are real. By Lemma \ref{lem:3.2}, $dq_0$ and $qd_0$ are real. Since $d_0$ and $q_0$ are non-zero real numbers, $d$ and $q$ are real. Hence we know that $(1,1), (1,4), (4,1)$ and $(4,4)$ entries of any element of $\Gamma$ are real.
Let $B_1$ and $B_2$ be elements of $\Gamma$ as written in Lemma \ref{lem:3.2}.
Considering the $(1,4)$ entry of $B_1^{-1}B_2$, we have that $\bar t_1d_2 + \bar h_1 h_2 + \bar p_1 p_2 + \bar d_1 t_2$ is real. Noting that
$B \left[ \begin{matrix} 0 & 0 & 0 & 1 \end{matrix} \right]^t = \left[ \begin{matrix} d & h & p & t \end{matrix} \right]^t$ and $$\langle B_2 \left[ \begin{matrix} 0 & 0 & 0 & 1 \end{matrix} \right]^t , B_1 \left[ \begin{matrix} 0 & 0 & 0 & 1 \end{matrix} \right]^t \rangle = \bar t_1d_2 + \bar h_1 h_2 + \bar p_1 p_2 + \bar d_1 t_2,$$
it follows that
$\langle B_2 \left[ \begin{matrix} 0 & 0 & 0 & 1 \end{matrix} \right]^t , B_1 \left[ \begin{matrix} 0 & 0 & 0 & 1 \end{matrix} \right]^t \rangle $ is real for all $B_1, B_2 \in \Gamma$.

Let $V$ be the $\mathbb R$-linear span of $\{B\left[ \begin{matrix} 0 & 0 & 0 & 1 \end{matrix} \right]^t  : B\in \Gamma \}$. Then it can be easily seen that $V$ is totally real. Furthermore every element of $\Gamma$ stabilizes $V$. Therefore $\Gamma$ leaves a totally real subspace of $\ch{3}$ invariant.
This means that $\Gamma$ is conjugate to a subgroup of $\mathbf S(\mathbf O(2,1)\times \mathbf O(1))$ or $\mathbf{SO}(3,1)$. Since $\mathbf S(\mathbf O(2,1)\times \mathbf O(1))$ is a subgroup of $\mathbf{SO}(3,1)$, we finally conclude that $\Gamma$ is conjugate to a subgroup of $\mathbf{SO}(3,1)$.

%{\bf Acknowledgement.} The authors thank to John Parker for encouragements and useful discussions.

%\noindent     Joonhyung Kim\\
%           Department of Mathematics, Konkuk University\\
%           1 Hwayang-dong, Gwangjin-gu\\
%           Seoul 143-701, Republic of Korea\\
%           e-mail: calvary\char`\@snu.ac.kr\\

\end{document}